\documentclass[11pt,reqno]{amsart}


\date{\today}


\usepackage[ansinew]{inputenc}
\usepackage{textcomp}
\usepackage{cite}

\usepackage{amsthm}
\usepackage{amssymb}
\usepackage{amsfonts}

\usepackage{enumerate}

\newtheorem{theorem}{Theorem}

\newtheorem{lemma}[theorem]{Lemma}

\theoremstyle{definition}

\theoremstyle{definition}




\setlength{\voffset}{-.6truein}
\setlength{\textheight}{9.1truein}
\setlength{\textwidth}{6.05truein}
\setlength{\hoffset}{-.7truein}

\newcommand{\lk}{\left(}
\newcommand{\ls}{\left<}

\newcommand{\N}{\mathbb{N}}

\newcommand{\R}{\mathbb{R}}
\newcommand{\rk}{\right)}
\newcommand{\rs}{\right>}

\newcommand{\tr}{\textnormal{Tr}}


\begin{document}

\title[A short proof of Weyl's law]{A short proof of Weyl's law for fractional differential operators}

\author[Leander Geisinger]{Leander Geisinger}
 \address{Leander Geisinger, Department of Physics, Princeton University, Princeton, NJ 08544, USA}
\email{leander@princeton.edu}

\thanks{\copyright\, 2013 by the author. This paper may be reproduced, in its entirety, for non-commercial purposes.}

\begin{abstract}
We study spectral asymptotics for a large class of differential operators on an open subset of $\R^d$ with finite volume. This class includes the Dirichlet Laplacian, the fractional Laplacian, and also fractional differential operators with non-homogeneous symbols. Based on a sharp estimate for the sum of the eigenvalues  we establish  the first term of the semiclassical asymptotics. This generalizes Weyl's law for the Laplace operator. 
\end{abstract}

\maketitle

\section{Introduction and main result}

In this note we study the asymptotic distribution of eigenvalues of a differential operator $\mathcal L_T$ with symbol $T  :  \R^d \to \R$ on an open set $\Omega \subset \R^d$ of finite volume.
We define the operator $\mathcal L_T$ in terms of the quadratic form
$$
Q_{T}[u] = \int_{\R^d}  T(p) |\hat u(p)|^2 dp
$$
restricted to the form domain
$$
\mathcal H_T(\Omega) = \left\{ u \in L^2(\R^d) \, : \, \int_{\R^d} \lk 1 + T(p)  \rk | \hat u(p)|^2 dp < \infty \ \textnormal{and} \ u \equiv 0 \ \textnormal{on} \ \R^d \setminus \Omega \right\} \, ,
$$
where $\hat u(p) = (2\pi)^{-d/2} \int e^{-ip\cdot x} u(x) dx$ is the Fourier transform of $u$. Then $\mathcal L_T$ is defined to be the self-adjoint operator satisfying $Q_T[u] = \ls u,\mathcal L_T u \rs$, where $\ls \cdot,\cdot \rs$ denotes the scalar product in $L^2(\R^d)$.

For example, for $T(p) = | p |^2$ we obtain the Dirichlet Laplace operator, with form domain given by the Sobolev space $H_0^1(\Omega)$. More generally, for $T(p) = |p|^{2s}$, $0 < s \leq 1$, $\mathcal L_T$ is the fractional Laplacian $(-\Delta)^s$, see for example \cite{DinPalVal12} for details concerning the definition.

For the Dirichlet Laplacian spectral asymptotics are well-known: For $\Omega \subset \R^d$ of finite volume the spectrum is discrete  and consists of eigenvalues $0 < \lambda_1 \leq \lambda_2 \leq \dots$ with finite multiplicities. 
In 1912, Weyl proved the famous asymptotic law \cite{Wey12a}
$$
\lambda_n =  \frac{4 \pi \Gamma(1+d/2)^{2/d}}{|\Omega|^{2/d}} n^{2/d} \lk 1+ o(1) \rk \, , \qquad n \to \infty \, ,
$$
initially for a bounded domain in two dimensions. Here $|\Omega|$ denotes the volume of $\Omega$  and $\Gamma$ is the gamma function. This result was later improved and generalized in various ways, see for example \cite{BirSol80,Hoe85,SafVas97,Ivr98} for a summary of results and applications concerning the  Laplace operator and further references.

The purpose of this note is to give a short proof of the analogue of this result for the operator $\mathcal L_T$ under minimal assumptions on the set $\Omega$ (we only require that the volume is finite) and under weak assumptions on the symbol $T$. The method is elementary, in particular, we do not use bracketing as in \cite[Thm. XIII.78]{ReeSim78} and we need only a simple version of a Tauberian theorem, see Lemma \ref{lem:tauber} below. Along the way, in Theorem \ref{thm:ber}, we prove a sharp bound on the sum of the eigenvalues of $\mathcal L_T$, an analogue of the Berezin-Li-Yau inequality  for the Dirichlet Laplacian \cite{Ber72a,LiYau83}.

We assume that $T$ satisfies the following conditions:

\begin{enumerate}[I]
\item \label{a1}
There is a function $T_0 :  \R^d \to \R$ with the following three properties. $T_0$ is homogeneous of degree $\alpha > 0$: $T_0(\nu p) = \nu^{\alpha} T_0(p)$ for $p \in \R^d$ and $\nu > 0$. The set of $p \in \R^d$ with $T_0(p) < 1$ has finite Lebesgue-measure:
$$
V_T := \left| \left\{ p \in \R^d \, : \, T_0(p) < 1 \right\} \right| < \infty \, .
$$
The function $T_0$ approximates $T$ in the sense that for all $p \in \R^d$
$$
\lim_{\nu \to \infty} \nu^{-\alpha} T(\nu p) = T_0(p) \, .
$$
Moreover, if this convergence is not uniform in $p \in \R^d$ we also require that there is a function $\tilde T  :  \R^d  \to \R$ which is locally integrable, larger than $1$ at infinity (i.e. there is $R > 0$ such that $\tilde T(p) \geq 1$ for $|p| \geq R$), and such that there is $\nu_0 > 0$ with $\tilde T(p) \leq \nu^{-\alpha} T(\nu p)$ for all $p \in \R^d$ and $\nu \geq \nu_0$.

\item \label{a2}
There are constants $C_0 >0$ and  $N \in \N$ such that for all $\eta \in \R^d$
$$
\sup_{p \in \R^d} \lk \frac 12 \lk T(p+\eta) + T(p-\eta) \rk - T(p) \rk \leq C_0 (1+|\eta|)^N \, .
$$
\end{enumerate}

Under these assumptions, since the volume of $\Omega$ is finite, the spectrum of $\mathcal L_T$ is discrete and consists of positive eigenvalues with finite multiplicities. We denote these eigenvalues by $(\lambda_k)_{k \in \N}$. To study the asymptotic distribution we introduce the counting function $N(\Lambda)$ that counts the number of eigenvalues below $\Lambda > 0$:
$$
N(\Lambda) = \sum_{k \in \N} \lk \Lambda - \lambda_k \rk_+^0 = \tr \lk \mathcal L_T - \Lambda \rk_-^0 \, .
$$
Here $x_\pm = (|x|\pm x)/2$ denotes the positive and negative part of  $x \in \R$ and we use the same convention for self-adjoint operators.

\begin{theorem}
\label{thm:weyl}
Let $\Omega \subset \R^d$ be an open set of finite volume and assume that the symbol $T$ satisfies assumptions \ref{a1} and \ref{a2}. Then,  as $\Lambda \to \infty$, the asymptotic formula 
\begin{equation}
\label{weyl}
\lim_{\Lambda \to \infty} \Lambda^{-d/\alpha} N(\Lambda) = (2\pi)^{-d} \, |\Omega| \, V_T 
\end{equation}
holds.
\end{theorem}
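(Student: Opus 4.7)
The plan is to derive two-sided asymptotics for the Riesz mean
\[R(\Lambda) := \tr(\mathcal L_T - \Lambda)_- = \int_0^\Lambda N(\mu)\,d\mu,\]
and to deduce the claimed asymptotic of $N(\Lambda)$ via the monotone Tauberian theorem of Lemma~\ref{lem:tauber}. Concretely, once we establish
\[R(\Lambda) = \tfrac{\alpha}{\alpha+d}\,(2\pi)^{-d}|\Omega|V_T\,\Lambda^{1+d/\alpha}(1+o(1)),\]
the non-decreasing function $N$ must satisfy $N(\Lambda)\sim(2\pi)^{-d}|\Omega|V_T\Lambda^{d/\alpha}$, which is the statement of the theorem.

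The upper bound on $R$ is exactly the Berezin-type inequality of Theorem~\ref{thm:ber},
\[R(\Lambda) \le (2\pi)^{-d}|\Omega|\int_{\R^d}(\Lambda - T(p))_+\,dp.\]
The substitution $p=\Lambda^{1/\alpha}q$ rewrites the right-hand side as $(2\pi)^{-d}|\Omega|\Lambda^{1+d/\alpha}\int\bigl(1-\Lambda^{-1}T(\Lambda^{1/\alpha}q)\bigr)_+\,dq$. By assumption~\ref{a1}, the integrand tends pointwise to $(1-T_0(q))_+$, which is supported on the bounded set $\{T_0<1\}$ of measure $V_T$. If the convergence is uniform, the limit follows at once; otherwise the hypothesis $\tilde T(q)\le\nu^{-\alpha}T(\nu q)$ with $\tilde T\ge 1$ near infinity supplies the integrable majorant $(1-\tilde T(q))_+$, and dominated convergence applies. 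A layer-cake computation using the homogeneity $|\{T_0<s\}|=s^{d/\alpha}V_T$ yields $\int(1-T_0(q))_+\,dq=\tfrac{\alpha}{\alpha+d}V_T$, giving the desired upper asymptotic.

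The matching lower bound on $R$ will come from a coherent-state trial density matrix; this is where assumption~\ref{a2} enters. Fix $\epsilon>0$, choose a real-valued even bump $\phi\in C_c^\infty(\R^d)$ with $\|\phi\|_2=1$ and support in a ball of radius $\delta$, and pick a compact $K\subset\Omega$ with $\dist(K,\partial\Omega)>\delta$ and $|K|\ge(1-\epsilon)|\Omega|$. The coherent states $\phi_{y,p}(x):=\phi(x-y)e^{ip\cdot x}$ lie in $\mathcal H_T(\Omega)$ for every $y\in K$, and together satisfy the resolution of identity $(2\pi)^{-d}\iint|\phi_{y,p}\rangle\langle\phi_{y,p}|\,dy\,dp=\id$. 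Hence
\[\gamma := (2\pi)^{-d}\int_K\int_{\{T(p)<\Lambda\}}|\phi_{y,p}\rangle\langle\phi_{y,p}|\,dy\,dp\]
satisfies $0\le\gamma\le\id$ and has range inside $\mathcal H_T(\Omega)$. Since $|\hat\phi|^2$ is even, assumption~\ref{a2} produces the key estimate
\[\langle\phi_{y,p},\mathcal L_T\phi_{y,p}\rangle = T(p) + \int\Bigl(\tfrac{T(p+\eta)+T(p-\eta)}{2}-T(p)\Bigr)|\hat\phi(\eta)|^2\,d\eta \le T(p)+C_\phi,\]
with $C_\phi:=C_0\int(1+|\eta|)^N|\hat\phi(\eta)|^2\,d\eta$ finite and independent of $\Lambda$. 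The variational principle $R(\Lambda)\ge -\tr((\mathcal L_T-\Lambda)\gamma)$ then gives
\[R(\Lambda)\ge (2\pi)^{-d}|K|\int_{\R^d}(\Lambda - T(p)-C_\phi)_+\,dp,\]
and the same change of variables as before (with $\Lambda$ replaced by $\Lambda-C_\phi$) shows that this is at least $(1-\epsilon)\tfrac{\alpha}{\alpha+d}(2\pi)^{-d}|\Omega|V_T\Lambda^{1+d/\alpha}(1+o(1))$. Letting $\epsilon\to 0$ closes the asymptotic.

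The hard part is the lower-bound construction: one has to check that $\gamma$ truly ranges in $\mathcal H_T(\Omega)$, that the error term $C_\phi$ remains subleading under the rescaling uniformly in $\Lambda$, and that dominated convergence can be applied over the growing phase-space region $\{T(p)<\Lambda\}$ when the convergence in~\ref{a1} is only pointwise. With these points addressed, the two-sided asymptotic for $R$ and Lemma~\ref{lem:tauber} together deliver the formula of Theorem~\ref{thm:weyl}.
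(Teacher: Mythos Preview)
Your plan is correct and matches the paper's proof closely: reduce to the Riesz mean via Lemma~\ref{lem:tauber}, get the upper bound from Theorem~\ref{thm:ber} plus dominated convergence, and get the lower bound by a coherent-state argument using assumption~\ref{a2} to control $\langle\phi_{y,p},\mathcal L_T\phi_{y,p}\rangle\le T(p)+C_\phi$. The only genuine difference is cosmetic: the paper integrates the coherent states over all of $\R^d\times\Omega_\delta$ and then applies Jensen's inequality to the spectral measure, obtaining
\[
\tr(\mathcal L_T-\Lambda)_-\ \ge\ (2\pi)^{-d}\iint_{\R^d\times\Omega_\delta}\bigl(\langle F_{p,q},\mathcal L_T F_{p,q}\rangle-\Lambda\bigr)_-\,dp\,dq,
\]
whereas you build a trial density matrix $\gamma$ restricted to a phase-space slab and invoke the variational principle directly.

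One small slip in your execution: with $\gamma$ integrated over $\{T(p)<\Lambda\}$, the bound $\langle\phi_{y,p},\mathcal L_T\phi_{y,p}\rangle\le T(p)+C_\phi$ only gives
\[
-\tr\bigl((\mathcal L_T-\Lambda)\gamma\bigr)\ \ge\ (2\pi)^{-d}|K|\int_{\{T(p)<\Lambda\}}\bigl(\Lambda-T(p)-C_\phi\bigr)\,dp,
\]
and the integrand is negative on the shell $\{\Lambda-C_\phi\le T(p)<\Lambda\}$, so this is \emph{not} bounded below by $\int(\Lambda-T(p)-C_\phi)_+\,dp$. The fix is immediate: either integrate $\gamma$ over $\{T(p)<\Lambda-C_\phi\}$ from the start, or observe that the shell contributes $O(\Lambda^{d/\alpha})$, which is lower order. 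The paper's Jensen route sidesteps this bookkeeping because the negative part $(\cdot)_-$ appears automatically.
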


The assumptions \ref{a1} and \ref{a2} are satisfied by the Dirichlet Laplacian and more generally by the fractional Laplacian $(-\Delta)^s$ with symbol $T(p) = |p|^{2s}$, $0 < s \leq 1$.  In this case the symbol itself is homogeneous and we have $T = T_0$ in assumption \ref{a1}. For the fractional Laplacian Theorem~\ref{thm:weyl} was proved by Blumenthal and Getoor  under additional assumptions on the boundary of $\Omega$ \cite{BluGet59}. 

We emphasize that the theorem also applies to non-homogeneous symbols, for example to the operator with symbol $T(p) = |p|^\alpha \pm |p|^\beta$, $0< \beta < \alpha \leq 2$. In this case we have $T_0(p) = |p|^\alpha$ and we can choose $\tilde T(p) = T_0(p)$ for $T(p) = |p|^\alpha+ |p|^\beta$ and $\tilde T(p) = T(p)$ for $T(p) = |p|^\alpha - |p|^\beta$.

To illustrate the result let us consider the fractional differential operator 
$$
\mathcal L_{2s}= \sum_{i=1}^d (-\partial_i^2)^s \, , \qquad 0 < s \leq 1 \, ,
$$
that was recently studied in \cite{Hat13}.  This operator corresponds to $\mathcal L_T$ with symbol $T(p) = \sum_{i=1}^d |p_i|^{2s} = \| p \|_{2s}^{2s}$ and is the generator of a Levy-process killed upon exiting $\Omega$. In \cite{Hat13} it is proved that \eqref{weyl} holds for $\mathcal L_{2s}$ but only if $\Omega$ is a hypercube. Theorem \ref{thm:weyl} shows that this can be generalized to arbitrary open sets $\Omega$ of finite volume.

Indeed, we note that (see \cite{Hat13})
$$
V_{\|p\|_{2s}^{2s}} = \int_{\R^d} \lk \| p \|_ {2s}^{2s} - 1 \rk_-^0 dp = \frac{(2\Gamma(1+1/2s))^d}{ \Gamma(1+d/2s)} \, .
$$
Moreover, observe that for $0 < s \leq 1$ and all $r \in [0,1]$, by concavity, 
$$
r^{2s} + 1 - \frac 12 \lk (1+r^2+2r)^s + (1+r^2-2r)^s \rk \geq r^{2s}+1  - (r^2+1)^s \geq 0 \, .
$$
It follows that, for all $t \in \R$, $\lk |1+t|^{2s} + |1-t|^{2s} \rk/2  - |t|^{2s} \leq 1$. Thus, for all $p_i \in \R$ and $\eta_i \in \R$,
$$
\sum_{i=1}^d \lk  \frac 12 \lk |p_i + \eta_i|^{2s} + |p_i - \eta_i|^{2s}\rk - |p_i|^{2s}  \rk \leq \sum_{i=1}^d |\eta_i|^{2s} \, .
$$
We see that the operator $\mathcal L_{2s}$ satisfies conditions \ref{a1} and \ref{a2} and from Theorem \ref{thm:weyl} we obtain
$$
\lim_{\Lambda \to \infty} N(\Lambda) \Lambda^{-d/2s} = \frac{(2\Gamma(1+1/2s))^d}{ \Gamma(1+d/2s)} \frac{|\Omega|}{(2\pi)^d} \, .
$$
For $s = 1$ we recover the result of Weyl \cite{Wey12a} about the Laplacian.

\section{Auxiliary results}
\label{sec:aux}

Our method of proof is rather  general  and applicable in other situations as well. We follow the strategy of \cite{Fra09a}, where an operator with magnetic field is considered. This approach is based on an application of coherent states. Moreover, we rely on the following sharp spectral estimate which is the analogue of the Berezin-Li-Yau inequality for the Dirichlet Laplacian \cite{Ber72a,Lie73,LiYau83}, see also \cite{Lap97}.

\begin{theorem}
\label{thm:ber}
For $\Omega \subset \R^d$ of finite volume and all $\Lambda > 0$
$$
\tr \lk \mathcal L_T - \Lambda  \rk_- \leq   \frac{|\Omega|}{(2\pi)^d} \int_{\R^d} \lk T(p) - \Lambda \rk_- dp \, .
$$
\end{theorem}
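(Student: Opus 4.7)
The plan is to follow the classical Berezin strategy: reduce the trace to a sum over eigenvalues, use the Fourier representation of the quadratic form together with Jensen's inequality to pass into momentum space, and then control the resulting spectral density by Bessel's inequality. The geometric factor $|\Omega|$ will arise precisely from Bessel applied to a plane wave restricted to $\Omega$; notably, only finiteness of $|\Omega|$ is needed here, assumptions \ref{a1} and \ref{a2} play no role in this bound.

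Concretely, let $(\psi_k)_{k \in \N}$ be an orthonormal system of eigenfunctions of $\mathcal L_T$ with eigenvalues $(\lambda_k)_{k\in\N}$, extended by zero to $\R^d \setminus \Omega$. Then $\tr(\mathcal L_T - \Lambda)_- = \sum_{k \in \N} (\Lambda - \lambda_k)_+$, and the quadratic-form representation gives
$$\lambda_k \ = \ Q_T[\psi_k] \ = \ \int_{\R^d} T(p) \, |\hat\psi_k(p)|^2 \, dp,$$
with $|\hat\psi_k(p)|^2 \, dp$ a probability measure on $\R^d$ because $\|\psi_k\|_{L^2(\R^d)} = 1$. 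The function $t \mapsto (\Lambda - t)_+$ is convex (it is the maximum of the affine function $\Lambda - t$ and zero), so Jensen's inequality yields
$$(\Lambda - \lambda_k)_+ \ \leq \ \int_{\R^d} (\Lambda - T(p))_+ \, |\hat\psi_k(p)|^2 \, dp.$$
Summing in $k$ and exchanging sum and integral by Tonelli produces
$$\tr(\mathcal L_T - \Lambda)_- \ \leq \ \int_{\R^d} (\Lambda - T(p))_+ \sum_{k \in \N} |\hat\psi_k(p)|^2 \, dp.$$

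To finish, I would bound the momentum-space density $\sum_k |\hat\psi_k(p)|^2$ uniformly in $p$ by Bessel's inequality. Since $\hat\psi_k(p) = (2\pi)^{-d/2} \int_\Omega e^{-ip\cdot x} \psi_k(x) \, dx$, Bessel applied to the orthonormal family $(\psi_k)$ in $L^2(\Omega)$ and to the test function $x \mapsto e^{ip\cdot x}$ (restricted to $\Omega$, which is square-integrable precisely because $|\Omega| < \infty$) gives
$$\sum_{k \in \N} |\hat\psi_k(p)|^2 \ \leq \ (2\pi)^{-d} \int_\Omega |e^{ip\cdot x}|^2 \, dx \ = \ \frac{|\Omega|}{(2\pi)^d}.$$
Substituting this bound and recalling that $(T(p) - \Lambda)_- = (\Lambda - T(p))_+$ yields the claimed inequality. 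I do not foresee any real obstacle: the argument is elementary and runs completely parallel to Berezin's original proof, with the eigenfunctions of $-\Delta$ replaced by those of $\mathcal L_T$ and $|p|^2$ replaced by $T(p)$.
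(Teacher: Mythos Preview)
Your proof is correct, but it takes a different route from the paper's. You argue eigenvalue by eigenvalue: write $\lambda_k = \int T(p)\,|\hat\psi_k(p)|^2\,dp$, apply Jensen's inequality to the convex function $t\mapsto(\Lambda-t)_+$, sum over $k$, and bound $\sum_k|\hat\psi_k(p)|^2$ via Bessel's inequality against the restricted plane wave $\chi_\Omega\, e^{ip\cdot x}$; this is essentially the Li--Yau argument. The paper instead works at the operator level: viewing $\mathcal L_T$ as the compression $\chi_\Omega L\chi_\Omega$ of the full-space Fourier multiplier $L$, it uses the variational inequality $\tr(\chi_\Omega(L-\Lambda)\chi_\Omega)_- \leq \tr(\chi_\Omega(L-\Lambda)_-\chi_\Omega)$ and then reads off the right-hand trace directly from the integral kernel of $(L-\Lambda)_-$ on the diagonal. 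Both arguments are short and classical; the paper's version packages the convexity step into a single operator inequality and avoids choosing an eigenbasis, while yours makes the roles of Jensen and of the finiteness of $|\Omega|$ (through Bessel) more transparent. Your remark that assumptions~\ref{a1} and~\ref{a2} play no role here is correct and applies equally to the paper's proof.
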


\begin{proof}
We follow the proof given in \cite{FraGei11,FraGei13a} for the Laplacian and the fractional Laplacian.
 Let $\chi_\Omega$ denote the characteristic function of $\Omega$. Then due to the definition of $\mathcal L_T$ (note that it is defined on functions that are zero on the complement of $\Omega$)
$$
\tr \lk \mathcal L_T - \Lambda \rk_- = \tr ( \chi_\Omega ( \mathcal L_T - \Lambda ) \chi_\Omega )_- \leq \tr ( \chi_\Omega ( \mathcal L_T - \Lambda )_- \chi_\Omega ) \, , 
$$
where the second relation follows from the variational principle. Now we can write out the kernel of the operator on the right and calculate its trace. We obtain
\begin{align*}
\tr \lk \mathcal L_T - \Lambda \rk_- &\leq \frac 1{(2\pi)^d}\int_{\R^d} \left[ \chi_\Omega(x) \int_{\R^d} e^{ip\cdot(x-y)} \lk T(p) - \Lambda \rk_- dp \, \chi_\Omega(y) \right]_{x=y} dx \\
&= \frac{|\Omega|}{(2\pi )^d} \int_{\R^d}  \lk T(p)-\Lambda \rk_- dp 
\end{align*}
and the proof is complete.
\end{proof}

If the symbol $T(p)$ is homogeneous of degree $\alpha$, so that $T(p) = T_0(p)$ in assumption \ref{a1}, then we change variables $p = \mu^{1/\alpha} \xi$, $\mu > 0$, and by Fubini's theorem we get
\begin{align}
\nonumber
 \int_{\R^d} \lk T(p) - \Lambda \rk_- dp &= \int_{\R^d} \int_0^\Lambda \lk T(p) - \mu \rk_-^0 d\mu \, dp \\
 \nonumber
 &= \int_0^\Lambda \mu^{d/\alpha} \int_{\R^d} \lk \mu^{-1} T(\mu^{1/\alpha} \xi) - 1 \rk_-^0 d\xi \, d\mu \\
 \label{hom}
 &= \frac \alpha{d+\alpha} \, V_T \, \Lambda^{1+d/\alpha} \, .
\end{align}
Hence for homogeneous symbols we obtain from Theorem \ref{thm:ber}
\begin{equation}
\label{berhom}
\tr \lk \mathcal L_T - \Lambda  \rk_- \leq \frac \alpha{d+\alpha}  \,  \frac{|\Omega|}{(2\pi)^d} \, V_T  \, \Lambda^{1+d/\alpha} 
\end{equation}
for all $\Lambda > 0$.
We note that $\tr \lk \mathcal L_T - \Lambda  \rk_- = \sum_k(\Lambda - \lambda_k)_+ = \int_0^{\Lambda} N(\mu) d\mu$.
Inserting the asymptotic result from Theorem~\ref{thm:weyl} we see that \eqref{berhom} yields a sharp bound.

In fact, \eqref{berhom} is equivalent to a sharp lower bound on the sum of the eigenvalues.
Note that, for all $M \in \N$, 
$$
\sum_{k = 1}^M \lambda_k = \sup_{\Lambda > 0} \lk M \Lambda - \sum_{k \in \N} \lk \Lambda - \lambda_k \rk_+  \rk
$$
so that, by \eqref{berhom},
\begin{equation}
\label{liyau}
\sum_{k = 1}^M \lambda_k \geq \frac{d}{d+\alpha} \, \frac{(2\pi)^\alpha}{\lk |\Omega| V_T \rk^{\alpha/d}} \, M^{1+\alpha/d} \, .
\end{equation}
The bounds \eqref{berhom} and \eqref{liyau} are generalizations of the Berezin-Li-Yau inequality for the Dirichlet Laplacian.
For the operator $\mathcal L_{2s}$ we obtain the bound
$$
\sum_{k=1}^M \mathcal \lambda_k \geq \frac d{d+2s} \frac{(2\pi)^{2s} \Gamma(1+d/2s)^{2s/d}}{(2\Gamma(1+1/2s))^{2s}} |\Omega|^{2s/d} M^{1+2s/d} 
$$
that was derived in \cite{Hat13} using the arguments from \cite{LiYau83}.

We also need the following well-known lemma, see for example \cite[Lemma 17.1]{Kor04}, which is a weak, elementary version of a Tauberian theorem. For completeness we include the short proof.

\begin{lemma}
\label{lem:tauber}
Let $(\xi_k)_{k \in \N}$ be a non-decreasing sequence of positive numbers that tends to infinity. Assume that there are finite constants $A > 0$ and $a > 0$ such that 
$$
\lim_{\Lambda \to \infty} \Lambda^{-a-1} \sum_{k \in \N} \lk \Lambda - \xi_k \rk_+ = A \, .
$$
Then
$$
\lim_{\Lambda \to \infty} \Lambda^{-a} \sum_{k \in \N} \lk \Lambda -\xi_k \rk_+^0 = (a+1) A \, .
$$
\end{lemma}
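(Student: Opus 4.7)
The idea is standard: let $F(\Lambda) := \sum_{k \in \N} (\Lambda - \xi_k)_+$ and $N(\Lambda) := \sum_{k \in \N} (\Lambda - \xi_k)_+^0$. Since $\xi_k \to \infty$, both sums are finite, $N$ is non-decreasing, and one checks directly from the definitions that $F(\Lambda) = \int_0^{\Lambda} N(\mu)\, d\mu$ for all $\Lambda > 0$. The hypothesis is $F(\Lambda)/\Lambda^{a+1} \to A$ and we want $N(\Lambda)/\Lambda^a \to (a+1)A$. The obstacle is that differentiating an asymptotic is not automatically legal, so we use monotonicity of $N$ to sandwich $N(\Lambda)$ between difference quotients of $F$.

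Fix $\epsilon \in (0,1)$. Since $N$ is monotone non-decreasing,
\begin{align*}
F(\Lambda(1+\epsilon)) - F(\Lambda) &= \int_{\Lambda}^{\Lambda(1+\epsilon)} N(\mu)\, d\mu \;\geq\; \epsilon \Lambda\, N(\Lambda), \\
F(\Lambda) - F(\Lambda(1-\epsilon)) &= \int_{\Lambda(1-\epsilon)}^{\Lambda} N(\mu)\, d\mu \;\leq\; \epsilon \Lambda\, N(\Lambda).
\end{align*}
Dividing by $\epsilon \Lambda^{a+1}$ this gives the two-sided bound
$$
\frac{1}{\epsilon}\lk \frac{F(\Lambda)}{\Lambda^{a+1}} - (1-\epsilon)^{a+1} \frac{F(\Lambda(1-\epsilon))}{(\Lambda(1-\epsilon))^{a+1}} \rk \;\leq\; \frac{N(\Lambda)}{\Lambda^a} \;\leq\; \frac{1}{\epsilon}\lk (1+\epsilon)^{a+1}\frac{F(\Lambda(1+\epsilon))}{(\Lambda(1+\epsilon))^{a+1}} - \frac{F(\Lambda)}{\Lambda^{a+1}}\rk.
$$

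Letting $\Lambda \to \infty$ and using the hypothesis, the outer fractions tend to the same limit, yielding
$$
\frac{A}{\epsilon}\lk 1 - (1-\epsilon)^{a+1} \rk \;\leq\; \liminf_{\Lambda \to \infty}\frac{N(\Lambda)}{\Lambda^a} \;\leq\; \limsup_{\Lambda \to \infty}\frac{N(\Lambda)}{\Lambda^a} \;\leq\; \frac{A}{\epsilon}\lk (1+\epsilon)^{a+1} - 1 \rk.
$$
Finally, sending $\epsilon \to 0$, both outer expressions converge to $(a+1)A$ (by the derivative of $t \mapsto t^{a+1}$ at $t=1$), so $\lim_{\Lambda \to \infty} N(\Lambda)/\Lambda^a = (a+1)A$, which is the claim. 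The only subtle point is ensuring that the comparison of $N$ with the difference quotient of $F$ goes in the correct direction on each side, which follows purely from monotonicity of $N$; no further input is needed.
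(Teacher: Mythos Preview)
Your proof is correct and follows essentially the same approach as the paper: both sandwich $N(\Lambda)$ between difference quotients of $F(\Lambda)=\int_0^\Lambda N$ using monotonicity of $N$, then let the increment shrink relative to $\Lambda$. The only cosmetic difference is that the paper works with an additive increment $h$ and later sets $h=\sqrt{\epsilon}\,\Lambda$, whereas you use the multiplicative increment $h=\epsilon\Lambda$ from the start and take $\Lambda\to\infty$ before $\epsilon\to 0$; your version is arguably the cleaner packaging of the same idea.
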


\begin{proof}
Let us introduce the notation  $S(\Lambda) = \sum_k \lk \Lambda - \xi_k \rk_+$ and $N(\Lambda) = \sum_k \lk \Lambda - \xi_k \rk_+^0$.
For any $h > 0$ and $\Lambda > 0$, $k \in \N$ we have $(\Lambda + h - \xi_k)_+ - (\Lambda - \xi_k)_+ \geq h (\Lambda-\xi_k)_+^0$, thus
\begin{equation}
\label{tauberproof}
S(\Lambda + h ) - S(\Lambda) \geq h N(\Lambda) \, .
\end{equation}
By assumption, for fixed $0 < \epsilon \leq 1$, we find $\Lambda$ large enough such that $|\Lambda^{-a-1} S(\Lambda) - A| \leq \epsilon$. Hence, for $0 < h \leq \Lambda$ relation \eqref{tauberproof} implies
\begin{align*}
N(\Lambda) & \leq \frac 1h \lk A \lk \lk \Lambda + h \rk^{a+1} - \Lambda^{a+1} \rk + \epsilon \lk (\Lambda+h)^{a+1} + \Lambda^{a+1} \rk \rk \\
& \leq  (a+1) A \Lambda^a + C \Lambda^{a-1} h + \frac \epsilon h \lk 2^{a+1} +1 \rk \Lambda^{a+1} 
\end{align*}
with a constant $C > 0$ depending only on $A$ and $a$. Now we choose $h = \sqrt \epsilon \Lambda$ and obtain
$$
\Lambda^{-a} N(\Lambda) \leq  (a+1) A  + \sqrt \epsilon \lk C + \lk 2^{a+1} +1 \rk \rk \, .
$$
Since $0 < \epsilon \leq 1$ was arbitrary this completes the proof of the upper bound. The lower bound follows similarly using the fact that  $(\Lambda - \xi_k)_+ -  (\Lambda - h - \xi_k)_+ \leq h (\Lambda-\xi_k)_+^0$ for $h > 0$.  
\end{proof}

\section{Proof of the main result}

With Theorem \ref{thm:ber} and Lemma \ref{lem:tauber} at hand we can now give the proof of Theorem \ref{thm:weyl}.
By Lemma \ref{lem:tauber} and the fact that $\tr \lk \mathcal L_T - \Lambda \rk_- = \sum_k (\Lambda - \lambda_k)_+$ it suffices to prove the asymptotics
\begin{equation}
\label{tauber}
\tr \lk \mathcal L_T - \Lambda \rk_- =\frac{\alpha}{\alpha +d} \, \frac{|\Omega|}{(2\pi)^d} V_T \, \Lambda^{1+d/\alpha} \lk 1 + o(1) \rk \, , \qquad \Lambda \to \infty \, .
\end{equation}

The upper bound follows from Theorem~\ref{thm:ber}.  Indeed, writing $p = \Lambda^{1/\alpha} \xi$ we get
$$
\tr \lk \mathcal L_T - \Lambda \rk_-  \leq \frac{|\Omega|}{(2 \pi)^d} \int_{\R^d} \lk T(p) - \Lambda \rk_- dp =  \frac{|\Omega|}{(2 \pi)^d} \int_{\R^d} \lk \Lambda^{-1} T ( \Lambda^{1/\alpha}  \xi ) - 1\rk_- d\xi \,  \Lambda^{1+d/\alpha} \, .
$$
By assumption \ref{a1}, dominated convergence shows that
$$
\tr \lk \mathcal L_T - \Lambda \rk_-  \leq  \frac{|\Omega|}{(2 \pi)^d} \int_{\R^d} \lk T_0(\xi) - 1\rk_- d\xi \,  \Lambda^{1+d/\alpha} \lk 1 + o(1) \rk
$$
as $\Lambda \to \infty$. Hence, the upper bound in \eqref{tauber} follows from the identity
\begin{equation}
\label{tvol}
 \int_{\R^d} \lk T_0(\xi) - 1\rk_- d\xi = \frac{\alpha}{\alpha + d} V_T
\end{equation}
which is derived in the same way as equation \eqref{hom} in Section \ref{sec:aux}.

To prove the lower bound fix $\delta > 0$ and put $\Omega_\delta = \{ x \in \Omega  : \mbox{dist}(x,\R^d \setminus \Omega) > \delta \}$. By dominated convergence, $|\Omega_\delta| \to |\Omega|$ as $\delta \to 0$, hence it suffices to show the lower bound in  \eqref{tauber} with $\Omega$ replaced by $\Omega_\delta$.

Let $g \in C_0^\infty(\R^d)$ be a real-valued, $L^2$-normalized function with support in $\{x \in \R^d \, : \, |x| \leq \delta/2\}$. For $p \in \R^d$ and $q \in \Omega_\delta$ we introduce the coherent states $F_{p,q}(x) = e^{ip\cdot x} g(x-q)$. Then the properties of coherent states (see, e.g, \cite[Thm. 12.8]{LieLos01}) imply
$$
\tr \lk \mathcal L_T - \Lambda \rk_- \geq \frac1{(2\pi)^d} \iint_{\R^d \times \Omega_\delta} \ls F_{p,q}, \lk \mathcal L_T - \Lambda \rk_- F_{p,q} \rs dp \, dq \, .
$$
Note that the map  $t \mapsto (t- \Lambda)_-$ is convex and that $\| F_{p,q} \|_{L^2(\R^d)} = 1$. Thus we can apply Jensen's inequality to the spectral measure of $\mathcal L_T$ and obtain
$$
\tr \lk \mathcal L_T - \Lambda \rk_- \geq \frac1{(2\pi)^d} \iint_{\R^d \times \Omega_\delta} \lk \left<  F_{p,q},  \mathcal L_T F_{p,q} \right> - \Lambda \rk_-  dp \, dq \, .
$$

We claim that there is a constant $C > 0$ (independent of $p$, $q$, and $\Lambda$) such that
\begin{equation}
\label{cohest}
\ls F_{p,q},  \mathcal L_T F_{p,q} \rs \leq T(p) + C \, .
\end{equation}
Then, after inserting this estimate into the bound above we can integrate over $q \in \Omega_\delta$ and find
$$
\tr \lk \mathcal L_T - \Lambda \rk_-  \geq \frac{|\Omega_\delta|}{(2\pi)^d} \int_{\R^d} \lk T(p) + C - \Lambda \rk_- dp \, .
$$
In the same way as above the relations
\begin{align*}
\int_{\R^d} \lk T(p) + C - \Lambda \rk_- dp &= \int_{\R^d} \lk \Lambda^{-1} T(\Lambda^{1/\alpha} \xi) + \Lambda^{-1} C - 1 \rk_- d\xi \, \Lambda^{1+d/\alpha} \\
&= \int_{\R^d}  \lk T_0(\xi) - 1 \rk_- d\xi \,  \Lambda^{1+d/\alpha} \lk 1 + o(1) \rk \\
&= \frac{\alpha}{d+\alpha} V_T \Lambda^{1+d/\alpha} \lk 1 + o(1) \rk
\end{align*}
follow from dominated convergence and the homogeneity of $T_0$ by assumption \ref{a1}. 
Thus we conclude
$$
\liminf_{\Lambda \to \infty} \Lambda^{-1-d/\alpha} \, \tr \lk \mathcal L_T - \Lambda \rk_- \geq \frac{\alpha}{d+\alpha} \, \frac{|\Omega_\delta|}{(2\pi)^d} \, V_T  \, .
$$
This is the required lower bound of \eqref{tauber}. Hence,  to complete the proof of Theorem \ref{thm:weyl} it remains to establish \eqref{cohest}.

By definition of $\mathcal L_T$ and $F_{p,q}$ we have
$$
\ls F_{p,q},  \mathcal L_T F_{p,q} \rs = \frac 1{(2\pi)^d} \iiint e^{i(p-\xi) \cdot (x-y)} g_q(x)g_q(y) T(\xi) \, dx  \, dy \, d\xi \, ,
$$
where we write $g_q(x) = g(x-q)$ and all integrals are taken over $\R^d$. We insert the identity $g_q(x)g_q(y) =   (g_q(x)^2 + g_q(y)^2- (g_q(x)-g_q(y))^2  )/2$ and use the symmetry of the integral above to obtain
\begin{align}
\nonumber
\ls F_{p,q},  \mathcal L_T F_{p,q} \rs &=\iiint e^{i(p-\xi) \cdot (x-y)}  \lk g_q(x)^2 - \frac 12 \lk g_q(x) - g_q(y) \rk^2 \rk T(\xi) \frac{ dx  dy d\xi}{(2\pi)^d} \\
\label{cohest2}
&= T(p) - \iiint e^{i(p-\xi) \cdot (x-y)}  \lk g_q(x) - g_q(y) \rk^2  T(\xi) \frac{ dx  dy d\xi}{2(2\pi)^d} \, ,
\end{align}
where in the second step we used the fact that $\int g(x-q)^2 dx = 1$ for all $q \in \R^d$.

To estimate the contribution of the second summand we substitute $y = x -z$ and apply Plancherel's Theorem to get
\begin{align*}
 &\iiint e^{i(p-\xi) \cdot z}  \lk g_q(x) - g_q(x-z) \rk^2 T(\xi) \frac{ dx  dz d\xi}{2(2\pi)^d} \\ 
 &=  \iiint e^{i(p-\xi) \cdot z} | \hat g_q(\eta) |^2 \left| 1- e^{-iz\cdot \eta}\right|^2 T(\xi) \frac{ d\eta  dz d\xi}{2(2\pi)^d} \, .
 \end{align*} 
We note that $|\hat g_q(\eta)|^2 = |\hat g(\eta)|^2$ is independent of $q$. Moreover, we write $| 1- e^{-iz\cdot \eta} |^2 = 2 - e^{iz \cdot \eta} - e^{-iz \cdot \eta}$ and perform the integration in $z$ and $\xi$. We find
\begin{align*}
 &\iiint e^{i(p-\xi) \cdot (x-y)}  \lk g_q(x) - g_q(y) \rk^2  T(\xi) \frac{ dx  dy d\xi}{2(2\pi)^d} \\
 &= \int \lk T(p) - \frac 12 \lk T(p+\eta) + T(p - \eta) \rk \rk |\hat g(\eta)|^2 d\eta \, .
\end{align*}
By assumption \ref{a2} the right-hand side is bounded below by $- C_0 \int (1+|\eta|)^N  |\hat g(\eta)|^2 d\eta \geq - C$. 
Combining this estimate with \eqref{cohest2} yields \eqref{cohest} and completes the proof of Theorem \ref{thm:weyl}.

\subsection*{Acknowledgments} It is a pleasure to thank Rupert L. Frank and Anna Vershynina for helpful comments. Financial support from DFG grant GE 2369/1-1 and NSF grant PHY-1122309 is gratefully acknowledged.


\begin{thebibliography}{10}

\bibitem{Ber72a}
F.~A. Berezin, \emph{Convex functions of operators}, Mat. Sb. (N.S.)
  \textbf{88(130)} (1972), 268--276.

\bibitem{BirSol80}
M.~{\v{S}}. Birman and M.~Z. Solomjak, \emph{Quantitative analysis in {S}obolev
  imbedding theorems and applications to spectral theory}, American
  Mathematical Society Translations, Series 2, vol. 114, American Mathematical
  Society, Providence, R.I., 1980.

\bibitem{BluGet59}
R.~M. Blumenthal and R.~K. Getoor, \emph{The asymptotic distribution of the
  eigenvalues for a class of {M}arkov operators}, Pacific J. Math. \textbf{9}
  (1959), 399--408.

\bibitem{DinPalVal12}
Eleonora Di~Nezza, Giampiero Palatucci, and Enrico Valdinoci,
  \emph{Hitchhiker's guide to the fractional {S}obolev spaces}, Bull. Sci.
  Math. \textbf{136} (2012), no.~5, 521--573.

\bibitem{FraGei13a}
R.~L. Frank and L.~Geisinger, \emph{Refined semiclassical asymptotics for
  fractional powers of the {L}aplace operator}, preprint: arXiv:1105:5181.

\bibitem{FraGei11}
\bysame, \emph{Two-term spectral asymptotics for the {D}irichlet {L}aplacian on
  a bounded domain}, Mathematical results in quantum physics, World Sci. Publ.,
  Hackensack, NJ, 2011, pp.~138--147.

\bibitem{Fra09a}
Rupert~L. Frank, \emph{Remarks on eigenvalue estimates and semigroup
  domination}, Spectral and scattering theory for quantum magnetic systems,
  Contemp. Math., vol. 500, Amer. Math. Soc., Providence, RI, 2009, pp.~63--86.

\bibitem{Hat13}
Agapitos~N. Hatzinikitas, \emph{Spectral properties of the {D}irichlet operator
  $\sum_{i=1}^d (-\partial_i^2)^s$ on domains in d-dimensional {E}uclidean
  space}, preprint: arXiv:1301.4806.

\bibitem{Hoe85}
L.~H{\"o}rmander, \emph{The analysis of linear partial differential operators,
  vol. 4}, Springer-Verlag, Berlin, 1985.

\bibitem{Ivr98}
V.~Ja. Ivrii, \emph{Microlocal analysis and precise spectral asymptotics},
  Springer Monographs in Mathematics, Springer-Verlag, Berlin, 1998.

\bibitem{Kor04}
Jacob Korevaar, \emph{Tauberian theory}, Grundlehren der Mathematischen
  Wissenschaften [Fundamental Principles of Mathematical Sciences], vol. 329,
  Springer-Verlag, Berlin, 2004, A century of developments.

\bibitem{Lap97}
A.~Laptev, \emph{Dirichlet and {N}eumann eigenvalue problems on domains in
  {E}uclidean spaces}, J. Funct. Anal. \textbf{151} (1997), no.~2, 531--545.

\bibitem{LiYau83}
P.~Li and S.~T. Yau, \emph{On the {S}chr{\"o}dinger equation and the eigenvalue
  problem}, Comm. Math. Phys. \textbf{88} (1983), no.~3, 309--318.

\bibitem{Lie73}
E.~H. Lieb, \emph{The classical limit of quantum spin systems}, Comm. Math.
  Phys. \textbf{31} (1973), 327--340.

\bibitem{LieLos01}
Elliott~H. Lieb and Michael Loss, \emph{Analysis}, second ed., Graduate Studies
  in Mathematics, vol.~14, American Mathematical Society, Providence, RI, 2001.

\bibitem{ReeSim78}
M.~Reed and B.~Simon, \emph{Methods of {M}odern {M}athematical {P}hysics
  {I}{V}. {A}nalysis of {O}perators}, Academic Press, 1978.

\bibitem{SafVas97}
Y.~Safarov and D.~Vassiliev, \emph{The asymptotic distribution of eigenvalues
  of partial differential operators}, Translations of Mathematical Monographs,
  155, American Mathematical Society, Providence, RI, 1997.

\bibitem{Wey12a}
H.~Weyl, \emph{Das asymptotische {V}erteilungsgesetz der {E}igenwerte linearer
  partieller {D}ifferentialgleichungen (mit einer {A}nwendung auf die {T}heorie
  der {H}ohlraumstrahlung)}, Math. Ann. \textbf{71} (1912), no.~4, 441--479.

\end{thebibliography}
\end{document}